\def\real{\hbox{\rm\vrule\kern-1pt R}}
\def\nat{\hbox{\rm\vrule\kern-1pt N}}
\def\2kpt{(KPT2)}
\def\c2kpt{(C(KPT2))}
\begin{document}


\title{Radio resource allocation in OFDMA multi-cell networks}
\author{
         {Paolo Detti}\inst{1}
    \and {Marco Moretti}\inst{2}
    \and{Andrea Abrardo}\inst{1}
    }

\institute{
           {Universit\`a degli Studi di Siena,
            Dipartimento di Ingegneria dell'Informazione,
           Siena, Italia;
           e-mail: {\tt \{abrardo,detti\}@dii.unisi.it}
           }
           \and
           {Dipartimento di Ingegneria dell'Informazione, Universit\`a
            di Pisa, Italia;
            e-mail {\tt moretti@iet.unipi.it}
           }
           }

\maketitle


\begin{abstract}
In this paper, the problem of allocating users  to radio resources
(i.e., subcarriers) in the downlink of an OFDMA cellular network
is addressed. We consider a multi-cellular environment with a
realistic interference model and a \emph{margin adaptive}
approach, i.e., we aim at minimizing total transmission power
while maintaining a certain given rate for each user.
The computational complexity issues of the  resulting model is
discussed and proving that the problem is NP-hard in the strong
sense. Heuristic approaches, based on network flow models, that
finds optima under suitable conditions, or ``reasonably good''
solutions in the general case are presented. Computational
experiences show that, in a comparison with a commercial
state-of-the-art optimization solver, the proposed algorithms are
effective in terms of solution quality and CPU times.

\noindent {\bf Keywords}: radio resource allocation, network flow models,
heuristic algorithms.

\end{abstract}

\section {Introduction}
\label{sec:intro}
%

Resource allocation (RA) is one of the most efficient techniques
to increase the performance of multicarrier systems. In an
orthogonal frequency-division multiple access (OFDMA) scheme, each
user of a communication system is allocated a different subset of
orthogonal subcarriers of a radio frame. If the transmitter
possesses full knowledge of channel state information, the
subcarriers can be assigned to the various users following a
certain optimality criterion to increase the overall spectral
efficiency. In fact, propagation channels are independent for each
user and thus the subcarriers that are in a deep fade for one user
may be good ones for another and the goal is to assign only high
quality  channels to each different user to exploit the so-called
\emph{multiuser diversity}. One of the major drawback of efficient
RA schemes is that their complexity is in general high and tends
to grow larger with the number of users and subcarriers. Many
resource allocation algorithms have been designed for taking
advantage of both the frequency selective nature of the channel
and the multi-user diversity \cite{Wong}-\cite{Jang} but all of
them exhibit a trade-off between complexity and performance:  low
complexity \cite{Kivanc,Rhee} algorithms tend to be outperformed
by those requiring high computational loads \cite{Wong,Kim}.

In this paper, we consider  the downlink of a multi-cellular OFDMA
system, and we deal with the assignment both of radio resources
(i.e., subcarriers) of a radio frame and of transmission formats
to users minimizing the overall transmission power, while
providing a given transmission rate for each user. The allocation
is performed on a radio frame basis, and we assume  that the
propagation channel is quasi static, i.e., it does not vary within
a radio frame. As a consequence, even if the problem is
intrinsically dynamic, for the allocation decisions within a radio
frame, data may be regarded as static.

The paper is organized as follows. Section \ref{sec: sys_model}
describes the system model and defines the RA problem. Section
\ref{sec:comp} addresses the computational complexity of the
problem. In particular, it is proved that the addressed problem is
NP-hard in the strong sense. Heuristic approaches, based on
network flow models, that finds optima under suitable conditions,
or ``reasonably good'' solutions in the general case are presented
in Section  \ref{sec:algo}. In Section \ref{sec:exp}, experimental
results are presented and discussed. Finally, Section
\ref{sec:conc} provides conclusive remarks.

\section{System model and problem definition}\label{sec: sys_model}
The problem we address, that we call RRAP, is a constrained
minimization problem in which subcarriers and transmission formats
must be assigned to the users, in such a way that a given bit-rate
is provided to each user and that the total transmission power is
minimized. In particular, we consider a multi-cell scenario with
users belonging to different cells and a frequency
bandwidth divided into orthogonal subcarriers.\\

Let $N=\{1,\ldots,n\}$ and $C=\{1,\ldots,c\}$ be the sets of the
users and cells, respectively, and let $N_k$ be the set of users
in cell $k$. (Hence, $N=\{N_1\cup\ldots\cup N_c\}$.) Each user
belongs to exactly one cell, and, given a user $i\in N$, we denote
by $b(i)$ the cell of user $i$. Let $M=\{1,\ldots,m\}$ be the set
of the available subcarriers and $Q=\{q_1,\ldots,q_p\}$ be the set
of possible transmission formats.

Let $R_i$ be the required transmission rate of user $i$, $i\in N$.
A given transmission format $q$ corresponds to the usage of a
certain error correction code and symbol modulation that leads to
a spectral efficiency $\eta_q$:  a user employing format $q$ on a
certain subcarrier transmits with rate $R=B\eta_q$, $B$ being the
bandwidth of each subcarrier. The target Signal-to-interference
ratio (SIR) to achieve the spectral efficiency
$\eta_q=\log_2\left(1+SIR(q)\right)$ is $SIR(q)=2^{\eta_q} - 1$.

To simplify the resource allocation algorithm, we impose that all
user requests are expressed as an integer multiple of a certain
fixed rate $R_0 = B\eta_0$, i.e., the rate requested by user $i$
is $R_i = r_i R_0$ with $r_i$ an integer number. In the same way,
we assume that if user $i$ transmits on subcarrier $j$
transmission format $q\in Q$, she gets a transmission rate of
$B\eta_q=Bq\eta_0$. Hence, bigger is the transmission format
bigger is the transmission rate. Moreover, given a certain
$\eta_0$, the target SIR to achieve the spectral efficiency
$\eta_q$ is $SIR(q)=2^{\eta_q} - 1=2^{q\eta_0} - 1$.

In general, users belonging to different cells can share the same
subcarrier, while interference phenomena do not allow two users in
the same cell to transmit on the same subcarrier. However, the
power required for the transmission on a given subcarrier
increases both with the set of users transmitting on that
subcarrier and with the used transmission formats. More precisely,
let $S(j)$ be the set of users (belonging to different cells)
which are assigned to (i.e., that are transmitting on) the same
subcarrier $j$, and let $q_i\in Q$ be the transmission format of
user $i \in S(j)$ on subcarrier $j$. Let $p_{ijq}$ be the
transmission power needed to user $i$ for transmitting with format
$q$ on subcarrier $j$. The transmission powers $p_{ijq}$ have to
satisfy the following system.

\begin{eqnarray}\label{sist:sir}
p_{ijq_i}=SIR(q_i){  {\sum\limits_{h \in S(j), h\not= i}
G_{i}^{b(h)}(j)
p_{hjq_h}}+BN_0  \over G_{i}(j) }& \; \; \forall i \in S(j) \label{sist1r}\\
p_{ijq_i}\geq 0 &\; \; \forall i \in S(j) \label{sist12r}
\end{eqnarray}

\noindent where $G_{i}(j)$ is the channel gain of user $i$ on
subcarrier $j$, $G_{i}^k(j)$ is the channel gain between user $i$
and the base station of cell $k\not= b(i)$ on subcarrier $j$.
 \noindent Values $G_{i}^k(j)$ are a measure of the
interference between user $i$ and users of other cells
transmitting on the same subcarrier $j$. In Equation
(\ref{sist1r}),  we refer to the term $\sum\limits_{h \in S(j),
h\not= i} G_{i}^{b(h)}(j) p_{hjq_h}$ as to {\em interference
term}.

Note that, power $p_{ijq_i}$ increases as the interference term
increases, moreover, the interference term depends on the set of
users, other than $i$, which are assigned to the same subcarrier,
and by their transmission formats. Note also that, system
(\ref{sist1r})--(\ref{sist12r}) may not have a feasible solution.
On the other hand, if only user $i$ is assigned subcarrier $j$
(i.e., if the interference term is $0$), by (\ref{sist1r}),
$p_{ijq_i}={ SIR(q_i) BN_0 \over G_{i}(j)}$.

A {\em feasible radio resource allocation} for RRAP consists in
assigning subcarriers to users and, for each subcarrier-user pair,
in choosing a transmission format, in such a way that $(a)$ for
each user $i$, a bit-rate $R_i$ is achieved, $(b)$ the users in
the same cell are not assigned to the same radio resource (i.e.,
subcarrier), $(c)$ given the set $S(j)$ of users assigned to radio
resource $j$, System (\ref{sist1r})--(\ref{sist12r}) has a
feasible solution, for any subcarrier $j\in M$.

RRAP consists in finding a feasible radio resource allocation
minimizing the overall transmission power.

\subsection{A MILP formulation for RRAP}\label{sec:ILPform}
In this section, a Mixed Integer Linear Programming (MILP)
formulation of RRAP is presented. Let $x_{ijq}$ be a binary
variable equal to $1$ if user $i$ is assigned to subcarrier $j$
with format $q$ (and $0$ otherwise), and let $p_{ijq}$ be a
positive real variable denoting the transmission power allocated
for user $i$ on subcarrier $j$ with format $q$. RRAP can be
formulated as follows.

{\small
\begin{eqnarray}
 \min  \sum_{i\in N,j\in M, q \in Q}p_{ijq}&& \label{fob}\\
p_{ijq}\leq L \, x_{ijq}\; \; \forall i\in N,j\in M, q \in Q &&\label{c1}\\
G_{i}(j) p_{ijq} - \sum_{h: \, b(h)\not= b(i), \, v \in Q} SIR(q) G_{i}^{b(h)}(j) p_{hjv}\geq SIR(q)BN_0 (1-L(1-x_{ijq}))&&\label{sir}\\
\forall i\in N,j\in M, q \in Q&&\nonumber\\
\sum_{i \in N_k, q\in Q}x_{ijq}\leq 1 \;\;\;\;\forall j\in M, k\in C &&\label{c2}\\
\sum_{j\in M, q\in Q}B\eta_0 q x_{ijq}\geq B\eta_0 r_i\;\;\;\;\forall i \in N &&\label{c3}\\
p_{ijq} \geq {SIR(q)BN_0 \over G_{i}(j)} x_{ijq}\;\;\;\; \forall i\in N,j\in M, q \in Q  &&\label{ridondante1}\\
p_{ijq}\geq 0\;\;\;\;\forall i\in N,j\in M, q \in Q &&\\
x_{ijq}\in \{0,1\}\;\;\;\;\forall i\in N,j\in M, q \in Q&&
\label{clast}
\end{eqnarray}
} The objective function accounts for the overall transmission
power. In Constraints (\ref{c1}) and (\ref{sir}), $L$ is a
suitable large positive number. Constraints (\ref{c1}) are logic
constraints, forcing power $p_{ijq}$ to be $0$ if subcarrier $j$
is not assigned to user $i$ with format $q$. In according with
Equations (\ref{sist1r}), Constraints (\ref{sir}) state that if
user $i$ is assigned subcarrier $j$ with format $q$ (i.e., if
$x_{ijq}=1$) power $p_{ijq}$ cannot be smaller than
${SIR(q)(\sum_{h \in N: \, b(h)\not= b(i), v \in Q}
G_{i}^{b(h)}(j) p_{hjv} + BN_0) \over G_{i}(j)}$. On the other
hand, if $x_{ijq}=0$, the right term of (\ref{sir}) is a large
negative number, and Constraints (\ref{sir}) are always satisfied.
Constraints (\ref{c2}) state that at most one user per cell (using
only one transmission format) can be assigned to a given
subcarrier. Constraints (\ref{c3}) require that a rate of at least
$R_i=B\eta_0 r_i$ is assigned to each user $i$. Recall that, if
subcarrier $j$ is assigned to user $i$ with format $q$, a rate of
$B\eta_0q$ is assigned to user $i$. Constraints (\ref{c3}) can be
divided by the term $B\eta_0$. Finally, Constraints
(\ref{ridondante1}) are redundant, but improve the solution of the
linear relaxation used by the algorithm introduced in Section
\ref{sec:algo}. They state that if user $i$ is assigned to
subcarrier $j$ with format $q$, the transmission power $p_{ijq}$
cannot be smaller than ${SIR(q)BN_0 \over G_{i}(j)}$
(corresponding to the case in which $i$ is the unique user
assigned to subcarrier $j$). In Section \ref{sec:exp}, we solve
the above MILP formulation on randomly generated RRAP problems.

\section{Problem complexity}\label{sec:comp}
In this section, we show that RRAP is strongly $NP$-hard even if a
single cell  exists (i.e., $c=1$) and only two transmission
formats can be used. In particular, the following theorem holds.
\begin{theorem} $RRAP$ is strongly NP-hard even
when $c=1$ and the set of the available transmission formats is
$Q=\{1, \bar q\}$.
\end{theorem}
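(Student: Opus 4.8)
The plan is to first exploit the single-cell hypothesis to strip the problem down to a purely combinatorial one, and then to reduce a canonical strongly NP-hard problem to it. When $c=1$ every subcarrier is used by at most one user (Constraint (\ref{c2}) with a single cell), so each set $S(j)$ is a singleton and the interference term in (\ref{sist1r}) vanishes. Hence the power of an assignment is fixed in closed form, $p_{ijq}=SIR(q)BN_0/G_{i}(j)$, and by choosing the channel gains $G_{i}(j)$ I can realize any positive cost $w_{ij}^{q}$ I like for the pair $(i,j)$ and format $q$, subject only to the two formats sharing the fixed ratio $w_{ij}^{\bar q}/w_{ij}^{1}=SIR(\bar q)/SIR(1)$, which is independent of $(i,j)$. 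The problem then reads: assign each subcarrier to at most one user and pick a format (delivering rate $1$ or $\bar q$), so that user $i$ collects total rate at least $r_i$, minimizing total cost.

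Before building the reduction I would record that the single-format restriction (only $q=1$) is exactly a min-cost transportation problem — subcarriers are unit supplies, users carry demands $r_i$, and arc costs are $w_{ij}^{1}$ — and is therefore solvable in polynomial time. This pinpoints the second format as the sole source of hardness and tells me the combinatorial gadget must live in the choice between rate $1$ and rate $\bar q$.

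For the hardness itself I would reduce from \textsc{3-Partition}, the standard strongly NP-hard number problem: given $3t$ integers with sum $tB'$, each in $(B'/4,B'/2)$, decide whether they split into $t$ triples each summing to $B'$. The construction would use users and subcarriers to represent the items and the $t$ bins, forbid unintended pairings by giving them a prohibitively large gain-induced cost, set the demands $r_i$ to the bin target, and fix a global power budget $P$ so that any allocation meeting the budget is forced to coincide with an exact packing; the format choice $\{1,\bar q\}$ supplies the binary lever that a pure flow lacks. I would then prove both directions: a valid partition yields an allocation of cost at most $P$, and conversely any allocation of cost at most $P$ induces a valid partition.

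The hard part will be the modelling step, not the complexity bookkeeping. Because a subcarrier can deliver only one of the two fixed rates $1$ or $\bar q$, whereas \textsc{3-Partition} items carry arbitrary sizes, the gadget must simulate variable item sizes and, above all, enforce the \emph{atomicity} of items: nothing in RRAP forbids a user from gathering its rate from subcarriers belonging to different ``bins'', yet the partition forbids splitting an item across groups. Engineering this no-splitting behaviour purely from the shared-subcarrier constraint, the demand thresholds in (\ref{c3}) and the format-induced cost gap — and showing it is strict enough that only partition-aligned allocations stay within $P$ — is where the real work lies. Finally I would verify that all fabricated quantities ($r_i$, the costs $w_{ij}^{q}$, the value $\bar q$, the budget $P$, and the numbers of users and subcarriers) are bounded by a polynomial in the input, which is precisely what upgrades the reduction from weak to \emph{strong} NP-hardness.
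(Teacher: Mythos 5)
Your setup is correct and matches the paper's: with $c=1$, Constraint (\ref{c2}) forces at most one user per subcarrier, the interference term vanishes, each power is the closed-form $SIR(q)BN_0/G_i(j)$, and the gains let you dial in essentially arbitrary costs (up to the fixed ratio $SIR(\bar q)/SIR(1)$ between the two formats, which you rightly flag). Your observation that the single-format restriction is a polynomial min-cost flow problem, so the hardness must come from the format choice, is also a sound and useful framing. But the proof stops exactly where the proof has to happen: you propose a reduction from \textsc{3-Partition}, concede that the gadget must ``simulate variable item sizes'' and ``enforce atomicity'' of items across bins, and then declare that engineering this ``is where the real work lies'' without doing it. That obstacle is not a technicality — it is the reason \textsc{3-Partition} is an awkward source problem here. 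With $Q=\{1,\bar q\}$ a subcarrier delivers only one of two rates, so an item of arbitrary size $a_j$ cannot be represented by a single subcarrier, and once you represent it by several subcarriers nothing in (\ref{c2})--(\ref{c3}) stops an allocation from scattering them across your intended ``bins.'' The cost structure alone cannot forbid this, because (as you yourself note) the two format costs for a given pair $(i,j)$ are locked to a universal ratio, which severely limits the cost gaps you can engineer. As written, neither direction of the reduction can be verified.

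The paper avoids this entirely by choosing a source problem whose items already have only two sizes: the strongly NP-hard scheduling problem $P\,|\,M_j\,|\,C_{max}\leq 2$, with jobs of length $1$ or $2$ and machine-eligibility sets $M_j$. Jobs become \emph{subcarriers} (so items are atomic by construction — each subcarrier is assigned to exactly one user with exactly one format), machines become users with demand $r_i=2$, job lengths $1$ and $2$ map onto formats $1$ and $\bar q$, the eligibility sets $M_j$ are encoded by setting $G_i(j)$ so that ineligible pairs cost more than the budget $\alpha$, and $\bar q$ is chosen so that type-1 subcarriers are unusable with format $\bar q$ within budget. After normalizing so that $F_1+2F_2=2n$, a counting argument forces every budget-feasible allocation to use each subcarrier with the ``correct'' format and to give each user either two type-1 subcarriers or one type-2 subcarrier, which is exactly a feasible schedule. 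If you want to salvage your route, you would need either to discover a gadget resolving the atomicity problem for \textsc{3-Partition} (and with only two formats this appears genuinely hard), or to switch to a two-size source problem as the paper does — at which point you should note that eligibility constraints like $M_j$ are essential, since without them two-size, capacity-two scheduling is polynomially solvable and your reduction would start from an easy problem.
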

\begin{proof}
Consider the case in which only one single cell exists. Note that,
in this case, by Constraints \eqref{c2}, at most one user can be
assigned to each subcarrier. Problem $RRAP$ in its decision form
can be stated as follows. We are given a set $N$ of $n$ users,
each requiring a given transmission rate, a set $M$ of
subcarriers, a set of transmission formats $Q$ and a
real value $\alpha$. All the users belong to the same cell.\\
\textbf{Question:} Is there an assignment of users to subcarriers
and  of transmission formats to each user-subcarrier pair such
that $(a)$ at most one user can be assigned to each subcarrier,
$(b)$ the transmission requirements of the users are fulfilled,
and $(c)$ the total
transmission power does not exceed $\alpha$?\\
The proof is by reduction from the strongly NP-hard scheduling
problem $P | M_j |C_{max} \leq 2$ \cite{Verhaegh}. According to
the notation for machine scheduling problems \cite{brucker},
problem $P |M_j |C_{max} \leq 2$ can be described as follows.
Given a set of $n$ identical parallel machines of capacity
$C_{max}=2$,  a set of jobs with processing times 1 or 2 (in the
following called lengths), and for each job $j$ a set of machines
$M_j$ able to process it, the problem is of finding, if it is
possible, an assignment of the jobs to the machines in such a way
that ($a$) the sum of the lengths of the jobs assigned to a given
machine does not exceed the machine capacity $C_{max}$, and ($b$)
each job $j$ is assigned to exactly one  machine in $M_j$.\\
Given an instance of $P |M_j |C_{max} \leq 2$, let $F_1$ and $F_2$
be the number of jobs in the instance of length 1 and 2,
respectively. Without loss of generality, we can suppose that
$F_1+2F_2=2n$, i.e., we can restrict to consider instances in
which all machine capacity is used in any feasible assignment. In
fact, a feasible instance in which $F_1+2F_2<2n$ can be
transformed into a feasible instance where $F_1+2F_2=2n$, by
adding $2N-F_1-2F_2$ jobs of length 1 that
can be processed by all the $n$ machines (hence, for such jobs we have that $M_j$ is equal to the set of machines).\\
 Given an instance of $P |M_j |C_{max}
\leq 2$, we build an instance $I$ of $RRAP$ as follows. In $I$,
two transmission formats exists, namely, $Q=\{1,\bar q\}$, where
$\bar q>1$ is a suitable value. In $I$, the number of  subcarriers
is equal to the number of jobs. Subcarriers are of two types. In
particular, for each job $j$ of length 1 or 2, we introduce a
subcarrier $j \in M$ of type 1 or 2, respectively. Users
correspond to machines. Hence, $n$ users are considered in $I$.
Each user requires at least a rate $R_i=2B\eta_0$ (i.e. $r_i=2$),
for $i=1,\ldots,n$. For each subcarrier $j$, let $A_j$ be the set
containing the users that
corresponds to the machines in $M_j$ (containing the machines able to process job $j$).\\
 Given a subcarrier $j$ of type 1, for each user $i$,  $G_i(j)$ is set in such a way that
$P_{ij1}= \frac{\alpha F_1}{ F_1+F_2}$ if $i \in A_j$ and
$P_{ij1}>\alpha$ if $i \notin A_j$. Format $\bar q$ is chosen in
such a way that $P_{ij\bar q}>\alpha$ for all users $i$ and
subcarriers $j$ of type 1. Hence, the subcarriers of type 1 can be
used only with format 1 (providing a total transmission rate of
$F_1$), in any solution with total power not greater than
$\alpha$.
 For each subcarrier $j$ of type 2, $G_{i}(j)$ is chosen in such a way that $P_{ij\bar q}=
\frac{\alpha F_2}{ F_1+F_2}$ ($P_{ij1}$ is obviously smaller) if
$i \in A_j$ and $P_{ij\bar q}>\alpha$ if $i \notin A_j$.\\
Note that, to get a solution with total power at most $\alpha$,
the $F_1$ subcarriers of type-1 can be only used with format 1. As
a consequence and since $F_1+2F_2=2n$, the subcarriers of type 2
must be necessarily used with format $\bar q$ to satisfy the
transmission requirements of all the users (i.e., constraints
\eqref{c3} with $r_i=2$). Hence, an assignment of subcarriers to
users of total power $\alpha$ in $I$, if any, assigns each
subcarrier $j$ of type 1 (of type 2) to a user of $A_j$ with
format 1 (format $\bar q$). Since $r_i=2$, either two subcarriers
of type 1 or one subcarrier of type 2 (getting a rate $B\eta_0
\bar q \geq 2B\eta_0$) are assigned to each user $i$. Such an
assignment corresponds to a feasible solution for the instance of
problem $P | M_j |C_{max} \leq 2$. On the other hand, if no
feasible solution exists for the $P | M_j |C_{max} \leq 2$
instance, by construction, no assignment of subcarriers to users
exists in $I$ with total power equal to $\alpha$ or smaller.
Hence, a feasible solution exists for the $P | M_j |C_{max} \leq
2$ instance if and only if an assignment of subcarriers to users
exists in $I$ with total power equal to $\alpha$, and the thesis
follows.\qed\end{proof}

\section{Heuristic algorithms for RRAP}\label{sec:algo}

Usually, small computational times (about few tens of
milliseconds) are required for solving RRAP, so that an exact
approaches can not used in practice for solving the problem. In
this section, two heuristic algorithms are proposed. The two
heuristics, called H-LP and H-LAGR, are an extension and an
improvement of two algorithms from the
literature\cite{MultiAssign,Moretti}, designed for solving RRAP
when only one single transmission format is considered (i.e.
$|Q|=1$).

\subsection{Algorithm H-LAGR}
H-LAGR is an iterative algorithm based on a network flow approach.
It consists in iteratively solving a relaxation of the MILP
formulation (\ref{fob})--(\ref{clast}), as described in the
following. When only one single transmission format is considered,
it is easy to note that (see for example \cite{MultiAssign}), the
MILP formulation obtained by relaxing the {\em interference
constraints} (\ref{sir}) in the Lagrangian way (see, for example,
\cite{netflow} for the description of the Lagrangian relaxation
technique) of the formulation of Section \ref{sec:ILPform} can be
solved in polynomial time as a minimum cost network flow problem.
In fact, when constraints (\ref{sir}) are relaxed and only one
single transmission format is used, the formulation can be
decomposed into $c$, i.e., one per cell, minimum cost flows
problems, where the problem formulation related to cell $k\in C$
is reported in the following. Note that, since a single
transmission format exists, we use variables $x_{ij}$ and
$p_{ij}$, where $x_{ij}$ is 1 if user $i$ is assigned to
subcarrier $j$ (with the unique transmission format) and 0
otherwise and variable $p_{ij}$ is the related transmission power.
\begin{eqnarray}\label{cell_k}
 \min  \sum_{i\in N_k,j\in M}p_{ij}&& \label{cc1}\\
p_{ij}\leq L \, x_{ij}&&\forall i \in N_k, j\in M \label{cc3}\\
\sum_{i \in U_k}x_{ij}\leq 1&&\forall j \in M \label{cc4}\\
\sum_{j}x_{ij}= r_i&&\forall i\in N_k \label{cc5}\\
p_{ij} \geq {SIR BN_0 \over G_{i}(j)} x_{ij}&&i \in N_k, j\in M \label{cc7}\\
p_{ij}\geq 0&&\forall i \in N_k, j\in M \label{cc8}\\
x_{ij}\in \{0,1\}&&\forall i \in N_k, j\in M \label{cc9}
\end{eqnarray}

Observe that, by constraints \eqref{cc7}, variables $p_{ij}$ can
be replaced by ${SIR BN_0 \over G_{i}(j)} x_{ij}$ throughout the
formulation, and it can be rewritten by only using variables
$x_{ij}$. Hence, the term ${SIR BN_0 \over G_{i}(j)}$ can be
viewed as the "cost" of assigning user $i$ to  subcarrier $j$. It
is easy to see that formulation \eqref{cc1}--\eqref{cc9} can be
solved as a minimum cost network flow problem \cite{MultiAssign}.

Each iteration of H-LAGR is composed of two phases. In the first
phase, the $c$ minimum cost network flow problems are
simultaneously solved to get a user-to-subcarrier assignment
(using a single transmission format). In the second phase,  the
solution found in the first phase (that may be not feasible for
the original problem, since the {\em interference constraints}
(\ref{sir}) are ignored) is "adjusted". In particular, the
following two types of adjustments are considered in the second
phase.

- Users' removal: If under the current assignment, users
transmitting on a given subcarrier $j$ requires high transmission
power levels, or if system (\ref{sist1r})--(\ref{sist12r}) for
subcarrier $j$ is not feasible, a peeling procedure is used to
remove some users from that subcarrier (the removed users are
those that make smaller the transmission power of the subcarrier).
At the same time, the "cost" of the removed user, say $i$, (for
subcarrier $j$) is increased to

$\lambda_{ij}{SIR BN_0 \over G_{i}(j)}$

\noindent where $\lambda_{ij}$ is a suitable value greater than 1.
Such cost updating is performed to make less profitable the
assignment of the user $i$ to the subcarrier $j$, at the next
iteration of H-LAGR.

- Transmission format adjustment: We illustrate this procedure
through an example. Suppose that, during the users' removal
procedure, a user $i$ is removed from subcarrier $j$, while it
still transmits on some subcarriers, say  for example $j'$ and
$j''$. (Hence, after the removal procedure, user $i$ does not
satisfy her transmission requirements.) In this second procedure,
the algorithm increases, if it possible, the transmission format
of user $i$ on the subcarriers $j'$ and $,j''$, until the
transmission rate requirements of the user are fulfilled.

H-LAGR stops when a prefixed number of iterations is reached
providing the best solution found so far (i.e. that maximizing the
overall transmission rate and minimizing  the total transmission
power).

\subsection{Algorithm H-LP}
H-LP is based on a decentralized approach \cite{Moretti}.
Also in H-LP, an iterative scheme is followed in which the single
format allocation problem \eqref{cc1}--\eqref{cc9} is separately
solved on each cell. At each iteration, all cells in the system
change their allocations simultaneously. The power costs $p_{ij}$
are updated on the base of the interference measured at the end of
the previous iteration.
The algorithm stops when a steady-state is reached (i.e., in which
no cell has interest in changing its allocation). As described in
\cite{Moretti}, the convergence of this decentralized algorithm is
not guaranteed and thus after a certain number of iterations the
rate requirements of the users are progressively reduced
determining a certain rate loss with respect to the
initial targets.

\section{Experimental results}\label{sec:exp}

In this section, preliminary experimental results on random
generated instances are presented. Instances have been generated
as in \cite{MultiAssign}. In all the instances the number of cells
is $c=7$, containing the same number of users, the number of
subcarriers is $m=16$, the overall signal bandwidth is $B_{tot}=5$
MHz and the channel is frequency selective Rayleigh fading with an
exponential power delay profile. The rms delay spread is
$\sigma_{\tau} = 0.5~\mu s$, typical of a urban environment.  We
also assume a fixed throughput per cell evenly shared among the
$|N_k|=n_k$ users, which are uniformly distributed in hexagonal
cells of radius $R=500$ meters. Hence, for each user $i \in C_k$,
$r_i=m/n_k$ is set.  Three classes of instances have been
generated varying $n_k$. In particular, we set $n_k=2,4,8$, so
that $r_i=8,4,2$, respectively, for each user in $N$, in the
different classes. For each value of $n_k$, 10 instances have been
generated through simulation of realistic scenarios.

The performances of the heuristics H-LAGR and H-LP have been
compared with a truncated branch and cut algorithm that uses the
Integer Linear Programming formulation of Section
\ref{sec:ILPform} solved with \textsc{Cplex} 9.1. All the
experiments have been performed on a 1.6GHz Pentium M laptop
equipped with 1GB RAM.

In Table \ref{TabRes}, a performance comparison of H-LP, H-LAGR
and CPLEX on the MILP formulation of Section \ref{sec:ILPform} is
given. In CPLEX, a limit of 100,000 branch and bound nodes has
been set. In the table, the first column reports on the value of
$n_k$. For each value of $n_k$, the results  are an average on the
10 instances. For each algorithm, $pow.$ is the total transmission
power (in $Watt$), \%$rate$ $loss$ is the percentage of not
assigned required transmission rate (respect to the total number
of required sub-carriers in the 10 instances), and $time$ is the
average computational time in seconds.

In all the instances, the three algorithms find solutions
satisfying all the requirements on the transmission rate. The
transmission powers in the solutions found  of H-LP and H-LAGR are
very similar (H-LP finds slightly better solutions) and are bigger
than those found by CPLEX. However, the solutions values found by
the two heuristics are quite close to the values found by CPLEX,
especially in the instances with 4 and 8 users per cell.

From the computational time point of view, we have that H-LP and
H-LAGR are very fast (about tens of milliseconds on average) while
CPLEX requires more than 2500 seconds, on average. Observe that,
H-LAGR requires higher computational times on instances with
smaller values of $n_k$. This is due to the "Transmission format
adjustment" procedure that requires more calls when $n_k$ is
smaller.

\begin{table}[htb]
{\center
\begin{tabular}{|c||c|c|c||c|c|c||c|c|c|}
\hline &\multicolumn{3}{|c||}{H-LP}& \multicolumn{3}{|c||}{H-LAGR}&\multicolumn{3}{|c|}{CPLEX} \\
\hline $n_k$   &$pow$.&\%$rate$&$time$&$pow$.&\%$rate$&$time$&$pow$.&\%$rate$&$time$\\
 & &$loss$& & & $loss$& && $loss$ &\\
\hline2 &  64.510 & 0& 0.020&60.901&  0 &0.089 &40.044& 0&514.8\\
\hline4  & 37.128 & 0& 0.022& 38.535 & 0 &  0.020&32.128&0&1562\\
\hline8 &26.518&0& 0.024&27.809&  0& 0.012&25.233&0&5631.7\\
\hline
\end{tabular}
\caption{Comparison results on random instances.}\label{TabRes}
}
\end{table}

\section{Conclusions}\label{sec:conc}
In this paper, we addressed a radio resource allocation problem
arising in wireless cellular networks. We  study the computational
complexity of the  problem and proposed an efficient heuristics
algorithms for its solution. A preliminary computational study
shows that the algorithms are suitable for real world applications
both for the solution quality and for the short computational
times.


\begin{thebibliography}{99}
\bibitem{MultiAssign}{A. Abrardo, A.
Alessio, P. Detti, M. Moretti. Radio resource allocation problems
for OFDMA cellular systems. \emph{Computers \& Operations
Research}, 36 (5), pp. 1572--1581, 2009.}

\bibitem{netflow}{Ahuja, R.K., Magnanti, T. L.,  Orlin, J. B. Network flows: theory,
algorithms, and applications.  Prentice Hall, 1993.}


\bibitem{Verhaegh}
J. Aerts, J. Korst, F. Spieksma, W. Verhaegh, and G. Woeginger,
\newblock "Load Balancing in Disk Arrays: Complexity of Retrieval
Problems,"
\newblock {\em Technical Report NL-TN} 2002-271, Philips Research
Eindhoven, 2002.

\bibitem{brucker}
P. Brucker, {\em Scheduling algorithms}, Springer, Heidelberg,
1998.

\bibitem{Kivanc}
D. Kivanc, G. Li, and H. Liu,
\newblock {"Computationally efficient bandwidth allocation and power control for OFDMA,"}
\newblock \em IEEE Trans. Wireless Commun.,
\newblock \em vol. 2, no. 6, pp. 1150-1158, November 2003.

\bibitem{Kim}
I. Kim, I. Park, and Y. Lee,
\newblock {"Use of Linear Programming for Dynamic Subcarrier and Bit Allocation in MultiUser
{OFDM},"}
\newblock \em IEEE Trans. Vehic. Technol.,
\newblock \em Vol. 55, 1195-1207, 2006.
\bibitem{Moretti}
M. Moretti and A. Todini,
\newblock {"A Resource Allocator for the Uplink of Multi-Cell OFDMA Systems,"}
\newblock \em IEEE Trans. Wireless Commun.,
\newblock \em vol. 6, no. 8, August 2007.

\bibitem{Rhee}
W. Rhee and J. Cioffi,
\newblock {"Increase in capacity of multiuser OFDM system using dynamic
subchannel allocation,"}
\newblock \em Proc. IEEE VTC 2000 Spring,
\newblock \em Tokio, Japan, pp. 1085-1089, May 2000.

\bibitem{Jang}
J. Jang and K. B. Lee,
\newblock {"Transmit power adaptation for multiuser OFDM systems,"}
\newblock \em IEEE J. Select. Areas Commun.,
\newblock \em vol. 21, no. 2, pp. 171-178, Feb. 2003.

\bibitem{Wong}
C. Y. Wong, R. S. Cheng, K. B. Letaief, and R. D. Murch,
\newblock {"Multiuser OFDM with adaptive subcarrier, bit, and power allocation,"}
\newblock \em IEEE J. Select. Areas Commun.,
\newblock \em Vol. 17, No. 10, pp. 1747-1758, Oct. 1999.



\end{thebibliography}
\end{document}